\newcommand{\col}{\hbox{col}}
\newtheorem{assmp}{\bf Assumption}
\newtheorem{rem}{\bf Remark}
\newtheorem{lem}{\bf Lemma}
\newtheorem{thm}{\bf Theorem}
\newtheorem{defin}{\bf Definition}
\newtheorem{problem}{\bf Problem}
\newcommand{\EQ}{\begin{eqnarray}}
	\newcommand{\EN}{\end{eqnarray}}
\newcommand{\EQQ}{\begin{eqnarray*}}
	\newcommand{\ENN}{\end{eqnarray*}}
\begin{document}
	\title{Data-Driven Output Regulation via Internal Model Principle}

	\author{Liquan~Lin~and~Jie~Huang,~\IEEEmembership{Fellow,~IEEE}
		\thanks{This work was supported in part by the Research Grants Council of the Hong Kong Special Administration Region under grant No. 14201420.}
\thanks{The authors are  the Department of Mechanical and Automation Engineering, The Chinese University of Hong Kong, Hong Kong (e-mail: lqlin@mae.cuhk.edu.hk; jhuang@mae.cuhk.edu.hk. Corresponding author: Jie Huang.)}}
	

	\maketitle
	
	\begin{abstract}
	The data-driven techniques have been developed to deal with the output regulation problem of unknown linear systems by various approaches. In this paper,
we first extend an existing algorithm from single-input single-output linear systems to multi-input multi-output linear systems. Then, by separating the dynamics used in the learning phase and the control phase, we further  propose
an improved algorithm that significantly reduces the computational cost and  weaken the solvability conditions over the first algorithm.

	\end{abstract}
	
	\begin{IEEEkeywords}
	Data-driven control, 	output regulation, reinforcement learning, value iteration, internal model principle.
	\end{IEEEkeywords}

	\IEEEpeerreviewmaketitle

    \section{Introduction}

    Reinforcement learning (RL) or adaptive dynamic programming (ADP) has been effective in tackling diverse optimal control problems. Combining RL techniques with parameter identification techniques further leads to the data-driven method to handle LQR problem for uncertain linear systems.
    For example, \cite{vrabie2009adaptive} studied the LQR problem for partially unknown linear systems by developing a policy-iteration (PI) method to iteratively solve an algebraic Riccati equation online. \cite{jiang2012computational} further extended the result of \cite{vrabie2009adaptive} to completely unknown linear systems with PI-based method. Since the PI-based method  requires an initially stabilizing feedback gain to start the iteration process,  \cite{bian2016} proposed a value-iteration (VI) method to solve the LQR problem of unknown linear systems without requiring an initially stabilizing feedback gain.

    Recently, the data-driven approach has been further extended to deal with the output regulation problem (ORP) for uncertain linear systems.
    There are two approaches, namely, the feedforward design and internal model design, to handling the OPR. Both of these two approaches are model-based.
Based on the PI method proposed by \cite{jiang2012computational}, \cite{gao2016adaptive} initialized the study of the optimal ORP for linear systems with unknown state equation using the feedforward design approach. Subsequently,  \cite{liu2018adaptive} further considered  the PI-based method to address the optimal ORP for unknown SISO linear systems based on  the internal model principle.   \cite{jiang2022value} leveraged the VI method of \cite{bian2016} to solve the optimal ORP with the feedforward design method. More recently, \cite{gao2021reinforcement}  proposed a VI-based algorithm  to solve the cooperative optimal ORP for unknown linear multi-agent systems via the distributed internal model approach. However, the followers in \cite{gao2021reinforcement} are assumed to be SISO linear systems and the input-output transmission matrix of each follower is assumed to be zero.
 It is noted that the algorithm of \cite{gao2016adaptive} was refined in \cite{lin2023} that reduced the computational cost  and weakened the solvability conditions of the algorithm of \cite{gao2016adaptive}. The result of  \cite{lin2023} was further extended to the  adaptive optimal cooperative output regulation problem of multi-agent systems in \cite{lin2024}.

    %
%

    In this paper, we first extend the VI-based data-driven algorithm of \cite{gao2021reinforcement} from
     single-input single-output linear systems to multi-input multi-output linear systems. Besides, we allow the input-output transmission matrix to be non-zero.
 Like in \cite{gao2021reinforcement}, the resultant algorithm  comes down to iteratively solving a sequence of linear equations.
     For a linear system with $m$ inputs, $p$ outputs,  $n$-dimensional state vector, and $q$-dimensional exosystem, each equation in the sequence typically contains $\frac{(n+pq)(n+pq +1)}{2} + (m+q)(n+pq)$ unknown variables. Thus, the computational cost can be formable for a high-dimensional system.
   For this reason, by separating the dynamics used in the learning phase and the control phase, we further propose an improved algorithm that also leads to
   a sequence of linear equations but which typically contains $\frac{(n+pq)(n+pq +1)}{2} + (n+pq)m + nq$ unknown variables. If the input-output transmission matrix is equal to zero like in \cite{gao2016adaptive} and \cite{gao2021reinforcement}, the number of the unknown variables is further reduced to $\frac{(n+pq)(n+pq +1)}{2} + n(m + q)$.
   Moreover,    once  the first step of the iteration is completed, we can further reduce
   the number of the unknown variables to $\frac{(n+pq)(n+pq +1)}{2} $. Thus,
this improved algorithm  significantly reduces the computational cost and  also weakens the solvability conditions of the first algorithm.

    \indent \textbf{Notation} Throughout this paper, $\mathbb{R}, \mathbb{N} , \mathbb{N}_+$ and $ \mathbb{C}_-$ represent the sets of real numbers, nonnegative integers,  positive integers and the open left-half complex plane, respectively. $\mathcal{P}^n$ is the set of all $n\times n$ real, symmetric and positive semidefinite matrices. $||\cdot||$ represents the Euclidean norm for vectors and the induced norm for matrices. $\otimes$ denotes the Kronecker product.  For $b=[b_1, b_2, \cdots, b_n]^T\in \mathbb{R}^n$, $\text{vecv}(b)=[b_1^2,b_1b_2,\cdots,b_1b_n,b_2^2,b_2b_3,\cdots, b_{n-1}b_n,b_n^2]^T \in \mathbb{R}^{\frac{n(n+1)}{2}}$. For a symmetric matrix $P=[p_{ij}]_{n\times n}\in \mathbb{R}^{n\times n}$, $\text{vecs}(P)=[p_{11},2p_{12},\cdots,2p_{1n},p_{22},2p_{23},\cdots, 2p_{n-1,n}, p_{nn}]^T\in \mathbb{R}^{\frac{n(n+1)}{2}}$. For  $v\in \mathbb{R}^n$, $|v|_P=v^TPv$. For column vectors $a_i, i=1,\cdots,s$,  $\mbox{col} (a_1,\cdots,a_s )= [a_1^T,\cdots,a_s^T  ]^T,$ and, if  $A = (a_1,\cdots,a_s )$, then  vec$(A)=\mbox{col} (a_1,\cdots,a_s )$.
    For $A\in \mathbb{R}^{n\times n}$, $\sigma(A) $ denotes the set composed of all the eigenvalues of $A$.
    For matrices $A_i, i=1,\cdots,n$,
$\mbox{block~diag}(A_1,...,A_n)$ is the block diagonal matrix $
	\left[\begin{array}{ccc}
A_1&&\\
&\ddots&\\
&&A_n
\end{array}\right]$.
$I_n $ denotes the identity matrix of dimension $n$.

    \section{Preliminary} \label{sec2}

    In this section, we summarize both the PI-based approach proposed in \cite{jiang2012computational} and the VI-based approach proposed in \cite{bian2016} for solving the model-free LQR problem. {\color{black} The internal model approach for solving the output regulation problem is also presented based on \cite{huang2004}.}

  \subsection{Iterative Approach to LQR Problem} \label{sec2-1}

    Consider the following linear system:
    \begin{align} \label{lisys}
    	\dot{x}=Ax+Bu
    \end{align}
    where $x\in \mathbb{R}^n$ is the system state, $u\in\mathbb{R}^m$ is the input.   We  make the following assumption:

    \begin{assmp}\label{ass1}
    	The pair $(A,B)$ is stabilizable.
    \end{assmp}

    The  LQR problem for \eqref{lisys} is to find a control law $u=\bar{K}x$ such that the  cost $\int_{0}^{\infty}(x^T\bar{Q}x+u^T\bar{R}u)d\tau$ is minimized,
    where $\bar{Q}=\bar{Q}^T\geq0, \bar{R}=\bar{R}^T>0$, with $(A,\sqrt{\bar{Q}})$ observable.

   By \cite{kucera1972},  under Assumption \ref{ass1},   the following algebraic Riccati equation
    \begin{align}\label{areli}
    	A^T\bar{P}^*+\bar{P}^*A+\bar{Q}-\bar{P}^*B\bar{R}^{-1}B^T\bar{P}^*=0
    \end{align}
admits a unique positive definite solution $\bar{P}^*$, and the solution of
 the LQR problem of \eqref{lisys} is given by $\bar{K}^*=-\bar{R}^{-1}B^T\bar{P}^*$.

 As  \eqref{areli} is nonlinear,   an iterative approach for obtaining $\bar{P}^* $ is given by solving the following linear Lyapunov equations \cite{Kleinman}:
 \begin{subequations} \label{aleq1sin}
	\begin{align}
&0=A_k^T\bar{P}^P_{k} +\bar{P}^P_{k}A_k+\bar{Q}+(\bar{K}^P_{k})^T\bar{R}\bar{K}^P_{k} \\
     	&\bar{K}^P_{k+1}=-\bar{R}^{-1} B^T\bar{P}^P_{k}
   \end{align}
  \end{subequations}
  where $A_k=A+B\bar{K}^P_k$, $k=0,1,\cdots$, and $\bar{K}^P_{0}$ is such that $A_0 $ is a Hurwitz matrix. The algorithm \eqref{aleq1sin}  guarantees the following properties for $k\in \mathbb{N}$:
   \begin{enumerate}
   	\item $\sigma(A_{k}) \subset \mathbb{C}_-$;
   	\item $\bar{P}^*\leq \bar{P}^P_{k+1}\leq \bar{P}^P_k$;
   	\item $\lim\limits_{k\to \infty}\bar{K}^P_k=\bar{K}^*,\lim\limits_{k\to \infty}\bar{P}^P_k=\bar{P}^*$.
   \end{enumerate}

 \subsection{PI Method for Solving LQR Problem without Knowing $A$ and $B$} \label{sec2-2}

   Based on Kleinman's algorithm \eqref{aleq1sin}, \cite{jiang2012computational} proposed a model-free algorithm to obtain the approximate solution to \eqref{areli}. It is noted from \eqref{lisys} that
   \begin{align} \label{pidy}
   	\dot{x}=A_kx-B(\bar{K}^P_kx-u)
   \end{align}
   Thus,
   \begin{align}\label{piint}
   	|x(t+\delta t)|_{\bar{P}^P_k}-&|x(t)|_{\bar{P}^P_k}=\int_{t}^{t+\delta t}-|x|_{\bar{Q}+(\bar{K}^P_{k})^T\bar{R}\bar{K}^P_{k}}\notag \\
   	&+2x^T(\bar{K}^P_k)^T\bar{R}\bar{K}^P_{k+1}x-2u^T\bar{R}\bar{K}^P_{k+1}x d\tau
   \end{align}

   For convenience,  for any vectors $a\in \mathbb{R}^n,b \in \mathbb{R}^m$ and any integer $s\in \mathbb{N}_+$, define

   \begin{align}
   	\begin{split}\label{newdefi}
   		\delta _a=&[\text{vecv}(a(t_1))-\text{vecv}(a(t_0)), \cdots ,\\ &\text{vecv}(a(t_s))-\text{vecv}(a(t_{s-1}))]^T\\
   		\Gamma_{ab}=&[\int_{t_0}^{t_1}a\otimes b d\tau , \int_{t_1}^{t_2}a\otimes b d\tau, \cdots , \int_{t_{s-1}}^{t_s}a\otimes b d\tau]^T
   	\end{split}
   \end{align}

   Then, \eqref{piint} and \eqref{newdefi} imply
   \begin{align}\label{pilinear}
   	\Psi_k\begin{bmatrix}
   		\text{vecs}(\bar{P}^P_k)\\
   		\text{vec}(\bar{K}^P_{k+1})
   	\end{bmatrix}=\Phi_k
   \end{align}
   where $\Psi_k=[\delta_x, -2\Gamma_{xx}(I_n\otimes ((\bar{K}^P_k)^T\bar{R}))+2\Gamma_{xu}(I_n\otimes \bar{R})]$ and $\Phi_k=-\Gamma_{xx}\text{vec}(\bar{Q}+(\bar{K}^P_{k})^T\bar{R}\bar{K}^P_{k})$.
   By iteratively solving \eqref{pilinear}, one can finally obtain the approximate solution to \eqref{areli} without using $A$ and $B$.

   The solvability of \eqref{pilinear} is guaranteed by the following lemma \cite{jiang2012computational}.
   \begin{lem}
   For $k=0,1,\cdots$, 	the matrix $\Psi_k$ has full column rank if
   	\begin{align}\label{rankconpi}
   		\textup{rank}([\Gamma_{xx}, \Gamma_{xu}])=& \frac{n(n+1)}{2}+mn
   	\end{align}
   \end{lem}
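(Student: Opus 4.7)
The plan is to show that $\Psi_k y=0$ forces $y=0$ by rewriting the equation in terms of $[\Gamma_{xx},\Gamma_{xu}]$, applying the rank hypothesis \eqref{rankconpi}, and closing with the Hurwitz property of $A_k$. The key preliminary identity comes from the open-loop dynamics $\dot{x}=Ax+Bu$: for any symmetric $Y\in\mathbb{R}^{n\times n}$, differentiating $x^T Y x$ and integrating on each sub-interval $[t_{i-1},t_i]$ gives
\begin{align*}
\delta_x\,\text{vecs}(Y)=\Gamma_{xx}\,\text{vec}(A^T Y+YA)+2\Gamma_{xu}\,\text{vec}(B^T Y).
\end{align*}
Combined with the Kronecker identity $(I_n\otimes M)\text{vec}(K)=\text{vec}(MK)$, this lets me assemble, for arbitrary symmetric $Y$ and arbitrary $K\in\mathbb{R}^{m\times n}$,
\begin{align*}
\Psi_k\begin{bmatrix}\text{vecs}(Y)\\ \text{vec}(K)\end{bmatrix}=\Gamma_{xx}\,\text{vec}(M_1)+2\Gamma_{xu}\,\text{vec}(M_2),
\end{align*}
where $M_1=A^T Y+YA-2(\bar{K}^P_k)^T\bar{R}K$ and $M_2=B^T Y+\bar{R}K$.

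Next, I would pin down $\ker[\Gamma_{xx},\Gamma_{xu}]$. Since $x^T Mx=0$ whenever $M=-M^T$, the set $\{(\text{vec}(M),0):M=-M^T\}$ is an $\frac{n(n-1)}{2}$-dimensional subspace of this kernel. Under \eqref{rankconpi} the total nullity is $n^2+mn-\bigl(\frac{n(n+1)}{2}+mn\bigr)=\frac{n(n-1)}{2}$, so by dimension the two subspaces coincide. Therefore $\Psi_k[\text{vecs}(Y);\text{vec}(K)]=0$ forces $M_1=-M_1^T$ and $M_2=0$.

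Finally, $M_2=0$ gives $K=-\bar{R}^{-1}B^T Y$. Taking the symmetric part of $M_1$ and using that $A^T Y+YA$ is already symmetric yields $A^T Y+YA=(\bar{K}^P_k)^T\bar{R}K+K^T\bar{R}\bar{K}^P_k$; substituting for $K$ collapses this to the homogeneous closed-loop Lyapunov equation
\begin{align*}
A_k^T Y+Y A_k=0,
\end{align*}
whose only solution is $Y=0$ since $A_k$ is Hurwitz by property (1) of the Kleinman iteration. Hence $K=0$, and $\Psi_k$ has full column rank. The main delicate step is the dimension count that identifies $\ker[\Gamma_{xx},\Gamma_{xu}]$ exactly with the antisymmetric subspace under \eqref{rankconpi}; everything else is bookkeeping with Kronecker identities and uniqueness of Lyapunov solutions for a Hurwitz coefficient.
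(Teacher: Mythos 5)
Your proof is correct and complete. The paper itself does not prove this lemma --- it simply cites it from Jiang and Jiang (2012) --- and your argument is essentially the standard one from that reference: map $[\mathrm{vecs}(Y);\mathrm{vec}(K)]$ through $\Psi_k$ to $\Gamma_{xx}\mathrm{vec}(M_1)+2\Gamma_{xu}\mathrm{vec}(M_2)$, identify the kernel of $[\Gamma_{xx},\Gamma_{xu}]$ with the antisymmetric block via the rank--nullity count, and kill $Y$ with the homogeneous Lyapunov equation $A_k^TY+YA_k=0$ using the Hurwitz property guaranteed by the Kleinman iteration. All the individual steps check out: the integral identity for $\delta_x\,\mathrm{vecs}(Y)$ follows from the open-loop dynamics, the Kronecker manipulations are right, and the dimension count $n^2+mn-\bigl(\tfrac{n(n+1)}{2}+mn\bigr)=\tfrac{n(n-1)}{2}$ exactly matches the antisymmetric subspace. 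One point worth appreciating: the much simpler argument the paper uses for its Lemma 3 (factoring $\hat{\Psi}_{VI}$ as the data matrix times a nonsingular block-diagonal $\Theta$) cannot work here, because the first block of $\Psi_k$ is $\delta_x$ rather than $\Gamma_{xx}$ or $I_{xx}$, so $\Psi_k$ is not a nonsingular column transformation of $[\Gamma_{xx},\Gamma_{xu}]$; the Hurwitz/Lyapunov step you supply is genuinely necessary, and your identification of that as the load-bearing part of the proof is accurate.
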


   The iteration approach using \eqref{pilinear} is called PI method. A drawback of PI method is that it needs an initial stabilizing control gain $\bar{K}^P_{0}$ to start the iteration. The VI approach of \cite{bian2016} to be introduced in the next subsection does not need an  initial stabilizing control gain.

 \subsection{VI Method for Solving LQR Problem without Knowing $A$ and $B$} \label{sec2-3}

   To introduce the VI method proposed in \cite{bian2016}, let $\epsilon_k$ be a real sequence satisfying
   \begin{align}
    	\epsilon_k>0, \; \sum_{k=0}^{\infty}\epsilon_k=\infty, \; \sum_{k=0}^{\infty}\epsilon_k^2<\infty,
    \end{align}
let    $\{B_q\}_{q=0}^{\infty}$ be a collection of bounded subsets in $\mathcal{P}^{n}$ satisfying
    \begin{align}
    	B_q\subset B_{q+1}, \; q\in \mathbb{N}, \; \lim\limits_{q\to \infty}B_q=\mathcal{P}^{n}
    \end{align}
    and let $\varepsilon>0$  be some small real number for determining the convergence criterion.
   Then the solution to \eqref{areli} can be approximated by the following Algorithm \ref{vialg1}.
 \begin{algorithm}
    	\caption{{\color{black}Model-based} VI  Algorithm for solving \eqref{areli} \cite{bian2016}}\label{vialg1}
    	\begin{algorithmic}[1]
    		\State Choose $\bar{P}^V_0=(\bar{P}^V_0)^T>0$. $k,q\leftarrow 0$.
    		\Loop
    		\State $\tilde{P}^V_{k+1}\leftarrow \bar{P}^V_k+\epsilon_k (A^T\bar{P}^V_k+\bar{P}^V_kA-\bar{P}^V_kB\bar{R}^{-1}B^T\bar{P}^V_k+\bar{Q})$
    		\If{$\tilde{P}^V_{k+1}\notin B_q$}
    		\State $\bar{P}^V_{k+1}\leftarrow \bar{P}^V_0$. $q\leftarrow q+1$.
    		\ElsIf{$|\tilde{P}^V_{k+1}-\bar{P}^V_k|/\epsilon_k<\varepsilon$}
    		\State \Return $\bar{P}^V_k$ as an approximation to $\bar{P}^*$
    		\Else
    		\State $\bar{P}^V_{k+1}\leftarrow \tilde{P}^V_{k+1}$
    		\EndIf
    		\State $k \leftarrow k+1$
    		\EndLoop
    	\end{algorithmic}
    \end{algorithm}

    The following result is from \cite{bian2016}.
    \begin{thm}
    	Under Assumption \ref{ass1},  Algorithm \ref{vialg1} is such that $\lim\limits_{k\to \infty}\bar{P}^V_k=\bar{P}^*$.
    \end{thm}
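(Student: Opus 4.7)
My plan is to view Algorithm \ref{vialg1} as a projected Euler discretization of the matrix Riccati differential equation (MRDE)
\begin{align*}
\dot{P}(t) = \mathcal{F}(P(t)) := A^T P + PA - PBR^{-1}B^T P + \bar{Q},
\end{align*}
with variable step size $\epsilon_k$, and to establish convergence in three stages: (i) analyze the flow of the MRDE and show $\bar{P}^*$ is the unique globally attractive equilibrium on $\mathcal{P}^n$; (ii) verify the boundedness safeguard (the $B_q$ test) only activates finitely many times, so after some index $k^\star$ the iterates satisfy $\bar{P}^V_{k+1}=\tilde{P}^V_{k+1}=\bar{P}^V_k+\epsilon_k\mathcal{F}(\bar{P}^V_k)$; (iii) apply a standard stochastic-approximation/ODE argument to deduce $\bar{P}^V_k\to\bar{P}^*$.

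For step (i), the key facts are classical: under Assumption \ref{ass1} with $(A,\sqrt{\bar{Q}})$ observable, $\bar{P}^*$ solves \eqref{areli} uniquely in $\mathcal{P}^n$, and the MRDE initialized at any $P(0)=P(0)^T>0$ produces a solution $P(t)$ that remains symmetric positive definite, is uniformly bounded, and satisfies $P(t)\to\bar{P}^*$ as $t\to\infty$. A Lyapunov candidate such as $V(P)=\operatorname{tr}\bigl((P-\bar{P}^*)^2\bigr)$, combined with the factorization $\mathcal{F}(P)-\mathcal{F}(\bar{P}^*) = (A-BR^{-1}B^T\bar{P}^*)^T(P-\bar{P}^*) + (P-\bar{P}^*)(A-BR^{-1}B^T\bar{P}^*) - (P-\bar{P}^*)BR^{-1}B^T(P-\bar{P}^*)$ using the Hurwitz closed-loop matrix $A+B\bar{K}^*$, gives a global dissipation inequality that forces convergence along the continuous flow.

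For step (ii), which I expect to be the main obstacle, I need to rule out infinite restarts. Since $\{B_q\}$ exhausts $\mathcal{P}^n$ and the MRDE trajectory from any fixed $\bar{P}^V_0$ is uniformly bounded in a compact set $\mathcal{K}\subset\mathcal{P}^n$, there exists $q^\star$ with $\mathcal{K}\subset B_{q^\star}$. A one-step perturbation bound of the form $\|\tilde{P}^V_{k+1}-P(t_{k+1};t_k,\bar{P}^V_k)\|\le C\epsilon_k^2$ together with $\sum\epsilon_k^2<\infty$ and the fact that each restart re-initializes at the same $\bar{P}^V_0$ should imply that for $q$ large enough the discrete orbit never escapes $B_q$. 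Carefully tracking the accumulated truncation error, and exploiting that $\sum_{k\ge k_0}\epsilon_k^2$ can be made arbitrarily small, yields that after finitely many restarts no further escape occurs; this is the technical crux and mirrors the argument in \cite{bian2016}.

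For step (iii), once the restart branch is inactive, $\bar{P}^V_k$ is a bona fide Robbins--Monro-type iteration with summable-square, non-summable step sizes tracking a globally asymptotically stable ODE on $\mathcal{P}^n$. The standard ODE method (e.g.\ Kushner--Yin-type results, which in this deterministic setting reduce to comparing the interpolated iterates with MRDE solutions on compact time intervals via Gronwall) then delivers $\lim_{k\to\infty}\bar{P}^V_k=\bar{P}^*$, completing the proof.
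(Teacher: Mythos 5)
A preliminary remark on the comparison itself: the paper contains no proof of this theorem. It is stated as a result imported from \cite{bian2016} (``The following result is from \cite{bian2016}''), so the only meaningful benchmark is the proof in that reference. Your three-stage architecture --- (i) global attractivity of $\bar{P}^*$ for the Riccati differential equation $\dot{P}=\mathcal{F}(P)$, (ii) finiteness of the resets because the continuous trajectory lives in a compact set eventually contained in some $B_{q^\star}$ and the accumulated one-step errors are controlled by the tail of $\sum_k\epsilon_k^2$, (iii) an ODE-method comparison of the interpolated iterates with the flow on compact time intervals via Gronwall --- is essentially the strategy of that reference, so at the structural level you have reconstructed the intended argument; steps (ii) and (iii) as you describe them are sound (and, since Algorithm \ref{vialg1} is the model-based version, there is no noise, so the Robbins--Monro machinery degenerates to a deterministic consistency-plus-stability argument).

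The genuine gap is in your execution of step (i). Writing $\Delta=P-\bar{P}^*$, $A_c=A-BR^{-1}B^T\bar{P}^*$ and $M=BR^{-1}B^T$, your factorization of $\mathcal{F}(P)-\mathcal{F}(\bar{P}^*)$ is correct, but the candidate $V(P)=\operatorname{tr}(\Delta^2)$ does not yield a dissipation inequality: one gets $\dot{V}=2\operatorname{tr}\bigl(\Delta^2(A_c+A_c^T)\bigr)-2\operatorname{tr}(\Delta^3 M)$, and the first term is nonpositive only if $A_c+A_c^T\leq 0$, which Hurwitzness of $A_c$ does not imply (nor is $\operatorname{tr}(\Delta^3M)$ sign-definite, since $\Delta$ may be indefinite). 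So the ``global dissipation inequality'' you invoke does not exist for this $V$, and no obvious quadratic-in-$\Delta$ Lyapunov function repairs it. The standard way to establish (i) --- and the one consistent with \cite{bian2016} --- is not Lyapunov-based: identify the forward solution $P(t)$ of the Riccati differential equation with terminal weight $P(0)=P_0\geq 0$ as the value matrix of a finite-horizon LQR problem, obtain uniform boundedness by evaluating the cost of any fixed stabilizing feedback (which exists by Assumption \ref{ass1}), obtain monotone convergence to the stabilizing solution of \eqref{areli} for $P_0=0$ by monotonicity in the horizon, and handle general $P_0\geq 0$ by the comparison principle for Riccati differential equations together with exponential decay of the optimal closed loop; observability of $(A,\sqrt{\bar{Q}})$ is what forces the limit to be the positive definite solution $\bar{P}^*$. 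With step (i) replaced by this classical argument (or by a citation to it), the rest of your outline goes through.
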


    \begin{rem}
    	 Algorithm \ref{vialg1} is called VI method, which  starts the iteration from any positive definite matrix $\bar{P}^V_0$. Thus, it does not need to know an  initial stabilizing control gain.
   \end{rem}

   When $A$ and $B$ are unknown, Algorithm \ref{vialg1} cannot  be directly used. To circumvent this difficulty,
   let $\bar{H}^V_k=A^T\bar{P}^V_k+\bar{P}^V_kA, \bar{K}^V_k=-\bar{R}^{-1}B^T\bar{P}^V_k$ and let
    \begin{align}
   	\begin{split}\label{defi}
   		I_{xx}=&[\int_{t_0}^{t_1}\textup{vecv}(x)d\tau , \int_{t_1}^{t_2}\textup{vecv}(x) d\tau, \cdots , \int_{t_{s-1}}^{t_s}\textup{vecv}(x) d\tau]^T\\
   		I_{xu}=&[\int_{t_0}^{t_1}x\otimes \bar{R}u d\tau , \int_{t_1}^{t_2}x\otimes \bar{R}u d\tau, \cdots , \int_{t_{s-1}}^{t_s}x\otimes \bar{R}u d\tau]^T\\
   	\end{split}
   \end{align}
   Then,
  it is obtained from \eqref{lisys} that
   \begin{align}\label{intvipre}
   	|x(t+\delta t)|_{\bar{P}^V_k}-|x(t)|_{\bar{P}^V_k} =\int_{t}^{t+\delta t}\lbrack |x|_{\bar{H}^V_k}-2{u}^T\bar{R}\bar{K}^V_kx \rbrack d\tau
   \end{align}
\eqref{defi} and  \eqref{intvipre}  imply
   \begin{align}\label{vilinearpre}
   	\begin{bmatrix}
   		I_{xx}&-2I_{xu}
   	\end{bmatrix}\begin{bmatrix}
   		\text{vecs}(\bar{H}^V_k)\\
   		\text{vec}(\bar{K}^V_k)
   	\end{bmatrix}=\delta_{x}\text{vecs}(\bar{P}^V_k)
   \end{align}



The solvability of \eqref{vilinearpre} is guaranteed by the following lemma \cite{bian2016}.
\begin{lem}
	The matrix $[I_{xx},-2I_{xu}]$ has full column rank  if
	\begin{align}\label{rankconpi}
		\textup{rank}([I_{xx}, I_{xu}])=& \frac{n(n+1)}{2}+mn
	\end{align}
\end{lem}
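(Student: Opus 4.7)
The plan is to observe that the two matrices $[I_{xx},\,-2I_{xu}]$ and $[I_{xx},\,I_{xu}]$ have exactly the same number of columns, namely $\tfrac{n(n+1)}{2}+mn$, since $\text{vecv}(x)\in\mathbb{R}^{n(n+1)/2}$ and $x\otimes\bar Ru\in\mathbb{R}^{mn}$. Showing full column rank therefore amounts to showing that their column ranks coincide with this common column count.

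The key step is to note that $[I_{xx},\,-2I_{xu}]$ is obtained from $[I_{xx},\,I_{xu}]$ by right-multiplication with the invertible block-diagonal matrix $\text{block diag}(I_{n(n+1)/2},\,-2I_{mn})$. Since right-multiplication by an invertible matrix preserves column rank, the two matrices share the same rank.

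Combining these two observations with the hypothesis $\text{rank}([I_{xx},\,I_{xu}])=\tfrac{n(n+1)}{2}+mn$ gives $\text{rank}([I_{xx},\,-2I_{xu}])=\tfrac{n(n+1)}{2}+mn$, which equals the number of columns and hence means $[I_{xx},\,-2I_{xu}]$ has full column rank.

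There is no real obstacle: the entire argument is the elementary observation that scaling a block of columns by a nonzero constant is an invertible column operation and therefore rank-preserving. The statement could equivalently have been phrased as a corollary of this fact, and the quoted reference \cite{bian2016} presumably treats it as immediate as well.
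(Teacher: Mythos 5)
Your proof is correct and is essentially the argument the paper itself uses: the paper states this lemma without proof (citing \cite{bian2016}), but its proof of the analogous Lemma \ref{lem33} proceeds exactly as you do, factoring the matrix as the product of $[I_{\hat\xi\hat\xi}~\Gamma_{\hat\xi u}~\Gamma_{\hat\xi v}]$ with a nonsingular block-diagonal scaling and concluding that the rank is preserved. Your dimension counts ($\mathrm{vecv}(x)\in\mathbb{R}^{n(n+1)/2}$, $x\otimes \bar Ru\in\mathbb{R}^{mn}$) and the observation that scaling a block of columns by $-2$ is an invertible column operation are exactly what is needed, so nothing is missing.
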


  {\color{black} \begin{rem}
   	In Step 3 of Algorithm \ref{vialg1}, $\tilde{P}^V_{k+1}$ can be represented by $\tilde{P}^V_{k+1}:= \bar{P}^V_k+\epsilon_k (\bar{H}^V_k-(\bar{K}^V_k)^T\bar{R}\bar{K}^V_k+\bar{Q})$. Thus, once $\bar{H}^V_k$ and $\bar{K}^V_k$ are solved from \eqref{vilinearpre}, one can update $\bar{P}^V_{k+1}$ following Steps 2-12 of Algorithm \ref{vialg1}. Thus, by iteratively applying the solution of \eqref{vilinearpre} to Algorithm \ref{vialg1}, we can finally obtain the approximate solution to \eqref{areli} without knowing $A$ and $B$.
   \end{rem}}

    \subsection{Output Regulation Problem and Internal Model Approach}\label{sec2-4}
    In this subsection, we review some results on the output regulation theory by internal model approach based on \cite{huang2004}.

    Consider a class of continuous-time linear systems in the following form:
    \begin{align}
    \begin{split}\label{follower}
    	\dot{x}&=Ax+Bu+Ev\\
    	y&=Cx+Du\\
    	e&=Cx+Du+Fv
    \end{split}
    \end{align}
    where $x\in \mathbb{R}^n$, $u\in \mathbb{R}^m$ and $y\in \mathbb{R}^p$ are the state, control input and output of the system, respectively, and  $v\in \mathbb{R}^q$ is the exogenous signal  generated by the following  exosystem:
    \begin{align}\label{leader}
    	\dot{v}=Sv
    \end{align}
	Let $-Fv$ be the reference output generated by the exosystem. Then, $e\in \mathbb{R}^p$ represents the tracking error of the system.

	The dynamic state feedback output regulation problem is defined as follows.
	\begin{problem}\label{p1}
		Given  plant \eqref{follower} and exosystem \eqref{leader},  design a dynamic state feedback controller $u$ of the following form
		\begin{align}
			\begin{split}\label{dycontroller}
				u=&K_xx+K_zz\\
				\dot{z}=&{G}_1z+{G}_2e
			\end{split}
		\end{align}
		with $z\in \mathbb{R}^{n_z}$
		such that the closed-loop system is exponentially stable with $v$ set to zero and $\lim\limits_{t\to \infty}e(t)=0$.
	\end{problem}
	
	In addition to Assumption \ref{ass1}, the solvability of Problem \ref{p1} needs the following assumptions.
	
	\begin{assmp}\label{ass2}
		$S$ has no eigenvalues with negative real parts.
	\end{assmp}
	
	\begin{assmp}\label{ass3}
		For all $\lambda \in \sigma(S)$, $\textup{rank}(\begin{bmatrix}
			A-\lambda I & B\\C & D
		\end{bmatrix})=n+p$.
	\end{assmp}

	The concept of internal model is given  as follows:
	\begin{defin} \color{black}
		A pair of matrices $(G_1,G_2)$ is said to be a minimum $p-\textup{copy}$ internal model of the matrix $S$ if
		\begin{align*}
			G_1=\textup{blockdiag}\underbrace{[\beta,\cdots,\beta]}_{p-tuple}, \; G_2=\textup{blockdiag}\underbrace{[\sigma,\cdots,\sigma]}_{p-tuple}
		\end{align*}
		where $\beta$ is a constant square matrix whose characteristic polynomial equals the minimal polynomial of $S$, and $\sigma$ is a constant column vector such that $(\beta, \sigma)$ is controllable.
	\end{defin}

	Let  $({G}_1,{G}_2)$ be a minimum $p-copy$ internal model of $S$. We call the following  system the augmented system.
	\begin{align}
		\begin{split}\label{augsys}
			\dot{x}&=Ax+Bu+Ev\\
			\dot{z}&={G}_1z+{G}_2e\\
			e&=Cx+Du+Fv
		\end{split}
	\end{align}

	Let $\xi=\col (x,z)$, $Y=\begin{bmatrix}
    	A&0\\ {G}_2C&{G}_1
    \end{bmatrix}$ and $J=\begin{bmatrix}
    B\\ {G}_2D
    \end{bmatrix}$. Then the state equation of the augmented system \eqref{augsys} can be put into the following compact form:
     \begin{align}\label{augcom}
		\dot{\xi}=&Y\xi+Ju+\begin{bmatrix}
					E\\G_2F
				\end{bmatrix}v
				\end{align}

 Let  $K=[K_x,K_z]$. Then, applying  the static state feedback control law $u = K \xi$ to the augmented system \eqref{augcom} gives
	\begin{align}\label{dyxi}
		\begin{split}
				\dot{\xi}=&Y\xi+Ju+\begin{bmatrix}
					E\\G_2F
				\end{bmatrix}v\\
				=&(Y+JK)\xi+\begin{bmatrix}
				E\\G_2F
			\end{bmatrix}v.
		\end{split}
	\end{align}

 \begin{rem}
 Generically, the minimal polynomial of $S$ is equal to the characteristic polynomial of $S$. Thus, typically, $n_z = pq$ and hence the dimension of $\xi$ is typically equal to $(n+pq)$.
\end{rem}


    By Lemma 1.26 and Remark 1.28 of \cite{huang2004}, the solvability of Problem \ref{p1} is summarized by the following theorem.

    \begin{thm} \label{thm1}
    	Under Assumptions \ref{ass1}-\ref{ass3}, let $({G}_1,{G}_2)$ be a minimum $p-copy$ internal model of $S$. Then, the pair $(Y,J)$ is stabilizable.
   Let  $K$ be such that $Y+JK$ is Hurwitz. Then Problem \ref{p1} is solved by the dynamic state feedback controller \eqref{dycontroller}.
    \end{thm}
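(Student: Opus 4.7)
The plan is to establish the two claims of Theorem \ref{thm1} in order: first the stabilizability of $(Y,J)$, and then that any $K$ for which $Y+JK$ is Hurwitz solves Problem \ref{p1}.

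For stabilizability I would invoke the PBH test. Writing out
\[
[Y-\lambda I,\ J] = \begin{bmatrix} A-\lambda I & 0 & B \\ G_2 C & G_1-\lambda I & G_2 D \end{bmatrix},
\]
I would split on whether $\lambda \in \sigma(S)$ for $\lambda$ with $\textup{Re}(\lambda)\geq 0$. If $\lambda \notin \sigma(S)$, the minimum $p$-copy construction gives $\sigma(G_1)\subseteq \sigma(S)$, so $G_1-\lambda I$ is nonsingular, and a block row reduction reduces full row rank of $[Y-\lambda I,\ J]$ to full row rank of $[A-\lambda I,\ B]$, which holds by Assumption \ref{ass1}. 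The case $\lambda\in \sigma(S)$, which I expect to be the main obstacle, I would handle by contradiction. Assuming a nonzero left annihilator $[\eta_1^T,\ \eta_2^T]$, the equation $\eta_2^T(G_1-\lambda I)=0$ forces each block $\eta_{2,i}$ to be a left eigenvector of $\beta$ at $\lambda$; introducing $\mu_i=\eta_{2,i}^T\sigma$, the remaining two equations combine into
\[
[\eta_1^T,\ \mu^T]\begin{bmatrix} A-\lambda I & B \\ C & D \end{bmatrix} = 0,
\]
whose only solution is $\eta_1=0$, $\mu=0$ by Assumption \ref{ass3}. Then $\mu_i=0$ together with $\eta_{2,i}^T\beta=\lambda\eta_{2,i}^T$ yields $\eta_{2,i}^T[\sigma,\beta\sigma,\ldots,\beta^{d-1}\sigma]=0$, and controllability of $(\beta,\sigma)$ in the definition of the minimum $p$-copy internal model forces every $\eta_{2,i}=0$, a contradiction.

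With a stabilizing $K$ in hand, I would then verify the tracking property starting from the closed-loop in \eqref{dyxi}, namely $\dot{\xi}=(Y+JK)\xi+\tilde{E}v$ with $\tilde{E}=\col(E,G_2F)$. The natural tool is the regulator-equation approach: stabilizability of $(Y,J)$ established above, together with Assumption \ref{ass3}, yields a matrix $\Pi$ with $\Pi S=(Y+JK)\Pi+\tilde{E}$ and $([C,\ 0]+DK)\Pi+F=0$. The change of variables $\tilde{\xi}=\xi-\Pi v$ then gives $\dot{\tilde{\xi}}=(Y+JK)\tilde{\xi}$, so $\tilde{\xi}\to 0$ exponentially and hence $e=([C,\ 0]+DK)\tilde{\xi}\to 0$, while internal exponential stability with $v=0$ is immediate since $Y+JK$ is Hurwitz.

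As indicated, the delicate step is the $\lambda\in\sigma(S)$ case of the PBH argument, where the minimal-polynomial and controllability structure of $(\beta,\sigma)$ must interact precisely with the transmission-zero hypothesis of Assumption \ref{ass3}; this is exactly the content of the classical internal-model-principle lemma (Lemma 1.26 of \cite{huang2004}). Once stabilizability is secured, the remaining regulator-equation argument is algebraic bookkeeping.
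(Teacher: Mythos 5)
Your overall architecture is the right one, and for the stabilizability half your PBH argument is complete and correct: the case split on $\lambda\in\sigma(S)$, the reduction of a left annihilator to $[\eta_1^T,\ \mu^T]\begin{bmatrix}A-\lambda I & B\\ C & D\end{bmatrix}=0$ via the scalars $\mu_i=\eta_{2,i}^T\sigma$, and the final appeal to controllability of $(\beta,\sigma)$ are exactly the content of Lemma 1.26 of \cite{huang2004}. Note that the paper does not prove Theorem \ref{thm1} at all --- it cites Lemma 1.26 and Remark 1.28 of \cite{huang2004} --- so there is no in-paper proof to diverge from; you are reconstructing the cited argument, and that half of the reconstruction is sound.

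The one genuine gap is in the regulation half. You assert that stabilizability of $(Y,J)$ together with Assumption \ref{ass3} ``yields'' a matrix $\Pi$ satisfying both $\Pi S=(Y+JK)\Pi+\tilde{E}$ and $([C,\ 0]+DK)\Pi+F=0$. The first (Sylvester) equation does have a unique solution, but for a different reason: $Y+JK$ is Hurwitz while $\sigma(S)$ lies in the closed right half-plane by Assumption \ref{ass2}, so the spectra are disjoint. The second identity, however, follows neither from stabilizability nor from Assumption \ref{ass3}; it is precisely the internal model principle and requires a second use of the structure of $(G_1,G_2)$. Concretely, set $W=([C,\ 0]+DK)\Pi+F$ and read off the $z$-block of the Sylvester equation, $ZS=G_1Z+G_2W$; one must show this forces $W=0$. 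The standard argument iterates $Z_iS^k=\beta^kZ_i+\sum_{j=0}^{k-1}\beta^j\sigma w_i^TS^{k-1-j}$ for each block row, applies the minimal polynomial $\alpha(\cdot)$ of $S$ using $\alpha(S)=0$ and $\alpha(\beta)=0$ (Cayley--Hamilton, since the characteristic polynomial of $\beta$ equals $\alpha$), and then invokes controllability of $(\beta,\sigma)$ to conclude each $w_i=0$. This is Lemma 1.27 of \cite{huang2004}, which the paper itself invokes immediately after the theorem to justify \eqref{regueq}. Without it, your change of variables $\tilde{\xi}=\xi-\Pi v$ leaves $e=([C,\ 0]+DK)\tilde{\xi}+Wv$ with an unresolved term $Wv$. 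Once that lemma is supplied, the rest of your argument goes through.
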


	Let  $K$ be such that $Y+JK$ is Hurwitz.  Then, by Lemma 1.27 of \cite{huang2004}, the following matrix equations admit a unique solution $(X,Z)$.
	 \begin{align}\label{regueq}
	 	\begin{split}
	 		XS=&AX+B(K_xX+K_zZ)+E\\
	 		ZS=&G_1Z+G_2[CX+D(K_xX+K_zZ)+F]\\
	 		0=&CX+D(K_xX+K_zZ)+F
	 	\end{split}
	 \end{align}
	
	 To see the role of the internal model, let $\tilde{x}=x-Xv, \tilde{z}=z-Zv$ and $\tilde{u}=u-Uv$ with $U=K_xX+K_zZ$. Then,  we have
	 \begin{align}\label{regueqx}
	 	\begin{split}
	 		\dot{\tilde{x}}=&A\tilde{x}+B\tilde{u}\\
	 		\dot{\tilde{z}}=&G_2C\tilde{x}+G_1\tilde{z}+G_2D\tilde{u}\\
	 		e=&C\tilde{x}+D\tilde{u}
	 	\end{split}
	 \end{align}
	 Let $\tilde{\xi}= \col (\tilde{x},\tilde{z})$. Then, \eqref{regueqx} can be put into the following form:
	 \begin{align}\label{tildexi}
	 	\dot{\tilde{\xi}}=Y\tilde{\xi}+J\tilde{u}
	 \end{align}
	 Thus, if $Y+JK$ is Hurwitz, then the control $\tilde{u}=K\tilde{\xi}$ is such that  $\lim\limits_{t\to \infty}e(t)=0$. That is,  $u=K\tilde{\xi}+Uv=K_xx+K_zz=K\xi$ solves Problem \ref{p1}.
	A stabilizing feedback gain $K$ can be obtained by solving the following LQR problem:
	 \begin{problem}\label{p2}
	 		\begin{align}\label{lqr}
	 			\begin{split}
	 			&\min_{\tilde{u}}\int_{0}^{\infty} (|\tilde{\xi}|_Q+|\tilde{u}|_R)dt\\
	 			&\textup{subject to \eqref{tildexi}}
	 			\end{split}
	 		\end{align}
	 		where $Q=Q^T\geq 0, R=R^T>0$ with $(Y,\sqrt{Q})$ observable.
	 \end{problem}
	
	 By Theorem \ref{thm1}, $(Y,J)$ is stabilizable. Thus, the following algebraic Riccati equation
	 \begin{align}\label{riccati}
	 	Y^TP^*+P^*Y+Q-P^*JR^{-1}J^TP^*=0
	 \end{align}
	 has a unique positive definite solution $P^*$. Therefore, the optimal solution to Problem \ref{p2} is given by $\tilde{u}^*=K^*\tilde{\xi}$ with a stabilizing feedback gain $K^*=-R^{-1}J^TP^*$.

	    \section{Model-free Output Regulation via Internal Model Principle and value iteration } \label{sec3}

When the matrices $A,B,C,D,E,F$ are unknown,
reference  \cite{liu2018adaptive} presented a PI-based method for solving the output regulation problem of linear SISO systems with $D=0$.
 Reference \cite{gao2021reinforcement} further presented the VI-based method for the cooperative output regulation problem for $N$ single-input and single-output linear systems, which contains the output regulation problem of the system \eqref{follower} as a special case with $N=1$.
In this section, we will first extend the VI method of \cite{gao2021reinforcement} to  the output regulation problem for multi-input, multi-output systems of the form \eqref{follower} with $D \neq 0$. Then, we further propose a modified scheme that will significantly reduce the computational cost of the first scheme and weaken the solvability condition. We assume that $\col (\xi,u)$ is  available.

    \subsection{VI-based Data-driven Algorithm} \label{viordi}

   Let
   \begin{align}\label{hkkk}
   	\begin{split}
   		H_k&=Y^TP_k+P_kY\\K_k&=-R^{-1}J^TP_k
   	\end{split}
   \end{align}
    From \eqref{dyxi} and \eqref{hkkk}, we have
    \begin{align}\label{intvi}
    		|\xi&(t+\delta t)|_{P_k}-|\xi(t)|_{P_k}\notag \\=&\int_{t}^{t+\delta t}\lbrack |\xi|_{H_k}-2u^TRK_k\xi+ 2v^T\begin{bmatrix}
    			E\\G_2F
    		\end{bmatrix}^TP_k\xi \rbrack d\tau
    \end{align}

Note that, for any $\xi\in\mathbb{R}^{n+n_z}$ and $H_k=H_k^T\in\mathbb{R}^{(n+n_z)\times(n+n_z)}$, we have
	\begin{align}\label{xihk}
	|\xi|_{H_k}&=(\xi\otimes\xi)^T\textup{vec}(H_k)= \textup{vecv}(\xi)^T\textup{vecs}(H_k)
	\end{align}
%
%
%

Let $$I_{\xi\xi}=[\int_{t_0}^{t_1}\textup{vecv}(\xi)d\tau , \int_{t_1}^{t_2}\textup{vecv}(\xi) d\tau, \cdots , \int_{t_{s-1}}^{t_s}\textup{vecv}(\xi)d\tau]^T.$$ Then
\eqref{xihk}	 together with \eqref{newdefi} and  \eqref{intvi} implies
    \begin{align}\label{vilinear}
    	\Psi_{VI}\begin{bmatrix}
    		\text{vecs}(H_k)\\
    		\text{vec}(K_k) \\
    		\text{vec}(\begin{bmatrix}
    			E\\G_2F
    		\end{bmatrix}^TP_k)
    	\end{bmatrix}=\Phi_{VI, k}
    \end{align}
    where
    \begin{align*}
    	\Psi_{VI}=&[\Psi_{VI}^1, \Psi_{VI}^2, \Psi_{VI}^3]\\
    	\Psi_{VI}^1=&I_{\xi\xi}\\
    	\Psi_{VI}^2=&-2\Gamma_{\xi u}(I_{n+n_z}\otimes R)\\
    	\Psi_{VI}^3=&2\Gamma_{\xi v}\\
    	\Phi_{VI, k}=&\delta_{\xi}\text{vecs}(P_k)
    \end{align*}

The unknown matrices of \eqref{vilinear} are $H_k,K_k$ and $\begin{bmatrix}
    		    		E\\G_2F
    		    	\end{bmatrix}$.
By iteratively applying the solution of \eqref{vilinear} to Algorithm \ref{vialg1}, we can  obtain the approximate optimal control gain.

    \begin{rem}
   As pointed out at the beginning of this section,  	reference \cite{gao2021reinforcement}  presented the VI-based method for the cooperative output regulation problem for $N$ single-input and single-output linear systems.   When $N=1$, the VI-based method in \cite{gao2021reinforcement} leads to a sequence of  equations similar to \eqref{vilinear}. However,  our derivation applies to multi-input, multi-output systems with $D \neq 0$.
     \end{rem}

    \subsection{An Improved VI-based Data-driven Algorithm} \label{sec3-2}

    In this subsection, we will  improve VI-based data-driven algorithm presented in the last subsection.

    First, let us observe from \eqref{augsys} that the matrix $P^*$ and hence the stabilizing feedback gain $K^*$ only depend on the matrices $Y$ and $J$ and they have nothing to do with the matrix
$\begin{bmatrix}
				E\\G_2F
			\end{bmatrix}$

Thus,   during the learning phase, we can  replace the  dynamic compensator $\dot{z}={G}_1z+{G}_2e$ by the following dynamic compensator \eqref{dycom}:
    \begin{align}\label{dycom}
    	\dot{{\hat{z}}}=G_1\hat{z}+G_2y
    \end{align}

 Let  $\hat{\xi}= \col (x,\hat{z})$. Then, we have
    \begin{align}
    	\dot{\hat{\xi}}&=\begin{bmatrix}
    		A&0\\G_2C&G_1
    	\end{bmatrix}\hat{\xi}+\begin{bmatrix}
    	B\\G_2D
    	\end{bmatrix}u+\begin{bmatrix}
    	E\\0
    	\end{bmatrix}v\notag \\
    	&=Y\hat{\xi}+Ju+\begin{bmatrix}
    		E\\0
    	\end{bmatrix}v
    \end{align}

    Thus, we have
    \begin{align}\label{intvinew}
    	|\hat{\xi}&(t+\delta t)|_{P_k}-|\hat{\xi}(t)|_{P_k}\notag \notag\\=&\int_{t}^{t+\delta t}\lbrack |\hat{\xi}|_{H_k}+2u^TJ^TP_k\hat{\xi}+ 2v^T\begin{bmatrix}
    		E\\0
    	\end{bmatrix}^TP_k\hat{\xi} \rbrack d\tau
    \end{align}

    From \eqref{newdefi} and \eqref{intvinew}, we have

    \begin{align}\label{vilinear2}
    	\hat{\Psi}_{VI}\begin{bmatrix}
    		\text{vecs}(H_k)\\
    		\text{vec}(J^TP_k) \\
    		\text{vec}(\begin{bmatrix}
    			E\\0
    		\end{bmatrix}^TP_k)
    	\end{bmatrix}=\hat{\Phi}_{VI, k}
    \end{align}
    where
    \begin{align*}
    	\hat{\Psi}_{VI}=&[	\hat{\Psi}_{VI}^1, 	\hat{\Psi}_{VI}^2, \hat{\Psi}_{VI}^3]\\
    		\hat{\Psi}_{VI}^1=&I_{\hat{\xi}\hat{\xi}}\\
    	\hat{\Psi}_{VI}^2=&2\Gamma_{\hat{\xi} u}\\
    	\hat{\Psi}_{VI}^3=&2\Gamma_{\hat{\xi} v}\\
    		\hat{\Phi}_{VI, k}=&\delta_{\hat{\xi}}\text{vecs}(P_k)
    \end{align*}

     The solvability of \eqref{vilinear2} is guaranteed by the following lemma.
    \begin{lem} \label{lem33}
    	The matrix $\hat{\Psi}_{VI}$ has full column rank if
    	\begin{align}\label{rankconvi2}
    		\textup{rank}([I_{\hat{\xi}\hat{\xi}}, \Gamma_{\hat{\xi} u}, \Gamma_{\hat{\xi }v}])=& \frac{(n+n_z)(n+n_z+1)}{2}\notag \\
    		&+(n+n_z)(m+q)
    	\end{align}
    \end{lem}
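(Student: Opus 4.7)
The plan is to recognize that Lemma \ref{lem33} reduces to an elementary linear-algebra observation: the matrix $\hat{\Psi}_{VI}$ differs from the hypothesized data matrix $[I_{\hat{\xi}\hat{\xi}},\Gamma_{\hat{\xi} u},\Gamma_{\hat{\xi} v}]$ only by rescaling its second and third block columns by the nonzero scalar $2$, so the two matrices must have the same rank.

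First I would count the columns of $\hat{\Psi}_{VI}$ block by block. Since $\text{vecs}(H_k)$ has length $\tfrac{(n+n_z)(n+n_z+1)}{2}$, the block $\hat{\Psi}_{VI}^1 = I_{\hat{\xi}\hat{\xi}}$ contributes $\tfrac{(n+n_z)(n+n_z+1)}{2}$ columns. Since $J^TP_k \in \mathbb{R}^{m \times (n+n_z)}$, the block $\hat{\Psi}_{VI}^2 = 2\Gamma_{\hat{\xi} u}$ contributes $(n+n_z)m$ columns. Since the exosystem input matrix premultiplying $P_k$ produces a $q \times (n+n_z)$ matrix, the block $\hat{\Psi}_{VI}^3 = 2\Gamma_{\hat{\xi} v}$ contributes $(n+n_z)q$ columns. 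The total column count is therefore $\tfrac{(n+n_z)(n+n_z+1)}{2} + (n+n_z)(m+q)$, which coincides exactly with the number appearing in the rank condition \eqref{rankconvi2}.

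Next I would write
\begin{equation*}
\hat{\Psi}_{VI} = [I_{\hat{\xi}\hat{\xi}}, \Gamma_{\hat{\xi} u}, \Gamma_{\hat{\xi} v}] \cdot \text{block diag}(I_{\frac{(n+n_z)(n+n_z+1)}{2}}, \, 2I_{(n+n_z)m}, \, 2I_{(n+n_z)q}).
\end{equation*}
The right factor is a square, nonsingular diagonal matrix, so right-multiplication by it preserves the column rank of the left factor. Combining this identity with hypothesis \eqref{rankconvi2} yields $\textup{rank}(\hat{\Psi}_{VI}) = \tfrac{(n+n_z)(n+n_z+1)}{2} + (n+n_z)(m+q)$, which equals its total number of columns. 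Hence $\hat{\Psi}_{VI}$ has full column rank.

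I do not anticipate a genuine obstacle here; the proof is essentially bookkeeping. The only item that merits care is confirming that the $R$-dependent factor present in the analogous block $\Psi_{VI}^2 = -2\Gamma_{\xi u}(I_{n+n_z}\otimes R)$ of subsection \ref{viordi} does \emph{not} reappear in $\hat{\Psi}_{VI}^2$, which is immediate from \eqref{intvinew} where the cross term reads $2u^TJ^TP_k\hat{\xi}$ with no $R$ absorbed. Once this is noted, the proof is complete.
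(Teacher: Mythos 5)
Your proof is correct and follows essentially the same route as the paper: both factor $\hat{\Psi}_{VI}$ as $[I_{\hat{\xi}\hat{\xi}},\Gamma_{\hat{\xi}u},\Gamma_{\hat{\xi}v}]$ times the nonsingular block-diagonal matrix $\mbox{block~diag}(I_{\frac{(n+n_z)(n+n_z+1)}{2}},2I_{(n+n_z)m},2I_{(n+n_z)q})$ and conclude that the ranks coincide, so condition \eqref{rankconvi2} gives full column rank. Your additional column-counting and the remark about the absence of the $R$ factor are consistent with the paper's definitions and add nothing that changes the argument.
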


\begin{proof}


{\color{black}
By the definition of $\hat{\Psi}_{VI}$, we have
		    	\begin{align*}
				    		\hat{\Psi}_{VI}=
				    			[I_{\hat{\xi}\hat{\xi}}~\Gamma_{\hat{\xi }u}~ \Gamma_{\hat{\xi}v}]\Theta
				    	\end{align*}
		    	where
		    	$$\Theta=
\mbox{block~diag}(I_{\frac{(n+n_z)(n+n_z+1)}{2}}, 2I_{(n+n_z)m}, 2I_{(n+n_z)q})
				    	$$ is nonsingular.
As a result, $\text{rank}(\hat{\Psi}_{VI})=\text{rank}([I_{\hat{\xi}\hat{\xi}}~\Gamma_{\hat{\xi }u}~ \Gamma_{\hat{\xi}v}])$.
Thus, \eqref{rankconvi2} implies that $\hat{\Psi}_{VI}$ has full column rank.
}
\end{proof}

    	In comparison with \eqref{vilinear},  \eqref{vilinear2} offers two new features.   First, the expression of $\hat{\Psi}_{VI}$ is slightly simpler than that of $\Psi_{VI}$. Second, the unknown matrices of  \eqref{vilinear2} are $H_k,J$ and $E$ and they only contain $\frac{(n+n_z)(n+n_z+1)}{2}+(n+n_z)m+nq$ unknown variables. As a result, the number of the unknown variables of \eqref{vilinear2} is smaller than  the number of the unknown variables of \eqref{vilinear} by $n_z q$ and the rank condition \eqref{rankconvi2} guarantees the least squares solution  $H_k,J$ and $E$ to  \eqref{vilinear2}.

    Moreover, it is possible to further reduce the computational complexity and the solvability condition of \eqref{vilinear2} by the following procedure:

   First, in \eqref{vilinear2}, letting $k=0$ gives
     \begin{align}
    	\hat{\Psi}_{VI}\begin{bmatrix}
    		\text{vecs}(H_0)\\
    		\text{vec}(J^TP_0) \\
    		\text{vec}(\begin{bmatrix}
    			E\\0
    		\end{bmatrix}^TP_0)
    	\end{bmatrix}=\hat{\Phi}_{VI, 0}
    \end{align}
    where $P_0$ can be any positive definite matrix.

    Let $P_0= \begin{bmatrix}
    		P_{01} & 0 \\
    		0   &  P_{02}
    		\end{bmatrix}$
   where $P_{01} \in \mathbb{R}^{n \times n}$ and $P_{02} \in \mathbb{R}^{n_z \times n_z}$ are two positive definite matrices.
    Then $\Gamma_{\hat{\xi }v}\text{vec}(\begin{bmatrix}
    	E\\0
    \end{bmatrix}^T P_0)=\Gamma_{xv}\text{vec}(E^T P_{01})$. Thus, we obtain
     {\color{black}\begin{align}\label{vinew11}
     	\hat{\Psi}_0\begin{bmatrix}
     		\textup{vecs}(H_0)\\ \textup{vec}(J^TP_0)\\ \textup{vec}(E^T P_{01})
     	\end{bmatrix}=\hat{\Phi}_{VI,0}
     \end{align}
    where
    \begin{align*}
    	H_0&=Y^TP_0+P_0Y\\
    	\hat{\Psi}_0&=[I_{\hat{\xi}\hat{\xi}}, 2\Gamma_{\hat{\xi} u}, 2\Gamma_{xv}]\\
    	\hat{\Phi}_0&=\delta_{\hat{\xi}}\textup{vecs}(P_0)
    \end{align*}}

The solvability of \eqref{vinew11} is guaranteed by the following lemma.
\begin{lem} \label{lem2}
	The matrix $\hat{\Psi}_{0}$ has full column rank if
	\begin{align}\label{rankconvinew1}
		\textup{rank}([I_{\hat{\xi}\hat{\xi}}, \Gamma_{\hat{\xi} u}, \Gamma_{x v}])=& \frac{(n+n_z)(n+n_z+1)}{2}\notag \\
		&+(n+n_z)m+nq
	\end{align}
\end{lem}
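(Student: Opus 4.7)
The plan is to mirror the argument used in the proof of Lemma \ref{lem33} just above, since the structure of $\hat{\Psi}_0$ differs from that of $\hat{\Psi}_{VI}$ only by replacing the block $\Gamma_{\hat{\xi}v}$ with $\Gamma_{xv}$ and adjusting the third diagonal block of the auxiliary scaling matrix accordingly.

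First, I would factor $\hat{\Psi}_0$ as a product of the data matrix $[I_{\hat{\xi}\hat{\xi}},\,\Gamma_{\hat{\xi}u},\,\Gamma_{xv}]$ and a nonsingular block-diagonal scaling matrix. Explicitly, by construction of $\hat{\Psi}_0$,
\begin{equation*}
\hat{\Psi}_0 = [I_{\hat{\xi}\hat{\xi}},\,\Gamma_{\hat{\xi}u},\,\Gamma_{xv}]\,\Theta_0,
\end{equation*}
where
\begin{equation*}
\Theta_0 = \mbox{block~diag}\bigl(I_{\frac{(n+n_z)(n+n_z+1)}{2}},\,2I_{(n+n_z)m},\,2I_{nq}\bigr).
\end{equation*}
The dimensions of the three blocks of $\Theta_0$ match the column partition of the data matrix because $\Gamma_{xv}$ has $nq$ columns (one Kronecker factor is $x\in\mathbb{R}^n$, the other is $v\in\mathbb{R}^q$), whereas $\Gamma_{\hat{\xi}v}$ had $(n+n_z)q$ columns in Lemma \ref{lem33}. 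This is the only bookkeeping change needed.

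Next, since $\Theta_0$ is a nonsingular matrix, right multiplication by $\Theta_0$ preserves column rank, hence
\begin{equation*}
\textup{rank}(\hat{\Psi}_0) = \textup{rank}([I_{\hat{\xi}\hat{\xi}},\,\Gamma_{\hat{\xi}u},\,\Gamma_{xv}]).
\end{equation*}
Under the hypothesis \eqref{rankconvinew1}, this rank equals $\frac{(n+n_z)(n+n_z+1)}{2}+(n+n_z)m+nq$, which is precisely the total number of columns of $\hat{\Psi}_0$. Therefore $\hat{\Psi}_0$ has full column rank, completing the proof.

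There is no real obstacle here: the only point to be careful about is the column count of $\Gamma_{xv}$ versus $\Gamma_{\hat{\xi}v}$, so that the third diagonal block of $\Theta_0$ has the correct size $nq$ rather than $(n+n_z)q$, and correspondingly the right-hand side of the rank condition decreases by $n_zq$ relative to \eqref{rankconvi2}. Everything else is a routine reuse of the factorization argument already established.
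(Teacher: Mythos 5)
Your proof is correct and is exactly the argument the paper intends: the paper omits the proof of this lemma, stating only that it is "similar to that of Lemma \ref{lem33}," and your factorization $\hat{\Psi}_0=[I_{\hat{\xi}\hat{\xi}},\,\Gamma_{\hat{\xi}u},\,\Gamma_{xv}]\,\Theta_0$ with the third diagonal block resized to $2I_{nq}$ is precisely that adaptation. The bookkeeping point you flag --- that $\Gamma_{xv}$ has $nq$ rather than $(n+n_z)q$ columns, which accounts for the drop of $n_zq$ in the rank condition --- is the only substantive change, and you handle it correctly.
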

\begin{proof}
	The proof of this lemma is similar to that of Lemma \ref{lem33}  and is thus omitted.
\end{proof}


{\color{black}\begin{rem}
	The rank condition \eqref{rankconvinew1} is milder than the original one \eqref{rankconvi2} since the column dimension of the matrix to be tested decreases by $n_zq$. This improvement can be significant when  $n_z q$  is large.
\end{rem}}

\begin{rem}
	If $D=0$ as in \cite{liu2018adaptive} and \cite{gao2021reinforcement}, the rank condition \eqref{rankconvinew1} can be further relaxed to
	\begin{align}\label{rankconvinew2}
		\textup{rank}([I_{\hat{\xi}\hat{\xi}}, \Gamma_{xu}, \Gamma_{x v}])=& \frac{(n+n_z)(n+n_z+1)}{2}\notag \\
		&+nm+nq
	\end{align}
	since $J=\begin{bmatrix}
		B\\0
	\end{bmatrix}$ in this case.
\end{rem}

After solving \eqref{vinew11}, we obtain the value of $J$ and $E$. Thus, the only unknown matrix in \eqref{vilinear2} is $H_k$. Since,
for any symmetric matrix $H\in \mathbb{R}^{n\times n}$, there exists a constant matrix $M_n\in \mathbb{R}^{n^2\times \frac{n(n+1)}{2}}$ with full column rank such that $M_n\text{vecs}(H)=\text{vec}(H)$, we have
\begin{align}\label{vinew111}
	\begin{split}
	\Gamma_{\hat{\xi} u}\textup{vec}(J^TP_k)&=\Gamma_{\hat{\xi} u}(I_{n+n_z}\otimes J^T)M_{n+n_z}\textup{vecs}(P_k)\\
	\Gamma_{\hat{\xi}v}\textup{vec}(\begin{bmatrix}
		E\\0
	\end{bmatrix}^TP_k)&=\Gamma_{\hat{\xi}v}(I_{n+n_z}\otimes \begin{bmatrix}
	E\\0
	\end{bmatrix}^T)M_{n+n_z}\textup{vecs}(P_k)
	\end{split}
\end{align}

From \eqref{vilinear2} and \eqref{vinew111}, we  obtain, for $k\geq 1$,
\begin{align}\label{vinew22}
\hat{\Psi}_{VI}'\textup{vecs}(H_k)=\hat{\Phi}_{VI}'\text{vecs}(P_k)
\end{align}
where
\begin{align*}
	\hat{\Psi}_{VI}'=&I_{\hat{\xi}\hat{\xi}}\\
	\hat{\Phi}_{VI}'=&\delta_{\hat{\xi}}-2\Gamma_{\hat{\xi} u}(I_{n+n_z}\otimes J^T)M_{n+n_z}\\&-2\Gamma_{\hat{\xi}v}(I_{n+n_z}\otimes \begin{bmatrix}
		E\\0
	\end{bmatrix}^T)M_{n+n_z}
\end{align*}

The matrix  $\hat{\Psi}_{VI}'$ has full column rank if $\textup{rank}(I_{\hat{\xi}\hat{\xi}})= \frac{(n+n_z)(n+n_z+1)}{2}$.


    \begin{rem}
     Since $	\hat{\Psi}_{VI}'$ has full column rank once \eqref{rankconvinew1} is satisfied,   \eqref{rankconvinew1} is the only requirement to ensure the solvability of \eqref{vinew11} and \eqref{vinew22}.
    \end{rem}

     {\color{black}\begin{rem}
    		In our improved algorithm, for all $k \geq 1$, we have reduced the problem of solving linear equation \eqref{vilinear} to that of solving \eqref{vinew22}.
     Thus,  the number of  unknown variables is reduced by $(n+n_z)(m+q)$ and  the computational cost decreases significantly.
    \end{rem}}


 \begin{rem}\label{rem5}
	{\color{black}As pointed out in \cite{bian2016}, another advantage of VI-based method over PI-based method is that $\hat{\Psi}_{VI}'$ and $\hat{\Phi}_{VI}'$ have nothing to do with the index $k$, which means that we only need to calculate the term $(\hat{\Psi}_{VI}'^T\hat{\Psi}_{VI}')^{-1}\hat{\Psi}_{VI}'^T\hat{\Phi}_{VI}'$ once, and iteratively solve $
		\text{vecs}(H_k)$ by the least-square representation as follows.
		\begin{align}\label{vinew33}
			\text{vecs}(H_k)=(\hat{\Psi}_{VI}'^T\hat{\Psi}_{VI}')^{-1}\hat{\Psi}_{VI}'^T\hat{\Phi}_{VI}' \text{vecs}(P_k)
		\end{align}
		The value of $(\hat{\Psi}_{VI}'^T\hat{\Psi}_{VI}')^{-1}\hat{\Psi}_{VI}'^T\hat{\Phi}_{VI}'$ can be saved and repeatedly used.}
\end{rem}

    Now, by iteratively applying the solution of \eqref{vinew33} to Algorithm \ref{vialg1}, we are able to obtain the approximate optimal controller.
    Our improved VI-based data-driven algorithm is summarized as Algorithm \ref{vialg2}.

    \begin{algorithm}
    	\caption{Improved VI-based Data-driven  Algorithm}\label{vialg2}
    	\begin{algorithmic}[1]
    		\State  Apply any locally essentially bounded initial input $u^0$.
    		Collecting data starting from arbitrary $t_0$ until the rank condition \eqref{rankconvinew1} is satisfied.
    		\State {\color{black}Choose $P_0= \begin{bmatrix}
    			P_{01} & 0 \\
    			0   &  P_{02}
    		\end{bmatrix}$  where $P_{01} \in \mathbb{R}^{n \times n}$ and $P_{02} \in \mathbb{R}^{n_z \times n_z}$ are two positive definite matrices. Solve $H_0$, $J$ and $E$ from \eqref{vinew11}. $k,q\leftarrow 0$.}
    		\Loop
    		\State $\tilde{P}_{k+1}\leftarrow P_k+\epsilon_k (H_k-P_kJR^{-1}J^TP_k+Q)$
    		\If{$\tilde{P}_{k+1}\notin B_q$}
    		\State $P_{k+1}\leftarrow P_0$. $q\leftarrow q+1$.
    		\ElsIf{$|\tilde{P}_{k+1}-P_k|/\epsilon_k<\varepsilon$}
    		\State \Return $K_k=-R^{-1}J^TP_k$
    		\Else
    		\State $P_{k+1}\leftarrow \tilde{P}_{k+1}$
    		\EndIf
    		\State $k \leftarrow k+1$
    		\State Solve $H_k$ from \eqref{vinew33}.
    		\EndLoop
    		\State $K^*=[K_x^*,K_z^*]=K_k$.
    		\State Obtain the following controller
    		\begin{align}\label{controllervi}
    			u^*=K_{x}^*x+K_{z}^*z
    		\end{align}
    	\end{algorithmic}
    \end{algorithm}


 For comparison of the two algorithms in this section, let us consider a case with $n=10,m=8,p=5,q=20,n_z=50$.  TABLE \ref{tab1} and TABLE \ref{tab2} show
 the numbers of unknown variables and rank conditions of the two algorithms, respectively.  The contrast is stark.

 \begin{table}[!ht]
 	\caption{Comparison of Computational Complexity}\label{tab1}
 	\center
 	\begin{tabular}{|c|c|c|}\hline

 		Algorithm & Equation Number &Unknown variables \\ \hline
 		First Algorithm & \eqref{vilinear}	& 3510 \\  \hline
 		
 		Improved Algorithm &\eqref{vinew22} & 1830 \\ \hline
 		
 	\end{tabular}
 	
 \end{table}

  	\begin{table}[!ht]
  	\caption{Comparison of rank condition}\label{tab2}
  	\center
  	\begin{tabular}{|c|c|}\hline

  		Algorithm & Rank Condition for Data Collecting \\ \hline
  		First Algorithm & 	$
  	\textup{rank}([\Gamma_{\xi\xi}, \Gamma_{\xi u}, \Gamma_{\xi v}])= 3510$ \\ \hline
  		
  	Improved Algorithm& $\textup{rank}([\Gamma_{\hat{\xi}\hat{\xi}}, \Gamma_{\hat{\xi} u}, \Gamma_{x v}])=2510$   \\  \hline
  		
  	\end{tabular}
  	
  \end{table}


\begin{thebibliography}{99}
		
%
%
%
		
		\bibitem{vrabie2009adaptive}
		D. Vrabie, O. Pastravanu, M. Abu-Khalaf, and F. L. Lewis,
		``Adaptive optimal control for continuous-time linear systems based on policy iteration'',
		\emph{Automatica}, vol. 45, no. 2, pp. 477-484, 2009.
		
		\bibitem{jiang2012computational}
		Y. Jiang and Z. P. Jiang,
		``Computational adaptive optimal control for continuous-time linear systems with completely unknown dynamics'',
		\emph{Automatica}, vol. 48, no. 10, pp. 2699-2704, 2012.


		\bibitem{bian2016}
		T. Bian and Z. P. Jiang,
		``Value iteration and adaptive dynamic pro- gramming for data-driven adaptive  optimal control design''
		\emph{Automatica}, vol. 71, pp. 348–360, 2016.
		
%
%
%
%


		
		
		
	\bibitem{gao2016adaptive}
	W. Gao and Z. P. Jiang,
	``Adaptive dynamic programming and adaptive optimal output regulation of linear systems''
	\emph{IEEE Transactions on Automatic Control}, vol. 61, no. 12, pp. 4164-4169, 2016.
	
	
	
	\bibitem{liu2018adaptive}
	Y. Liu and W. Gao,
	``Adaptive optimal output regulation of continuous-time linear systems via internal model principle'',
	in \emph{2018 9th IEEE Annual Ubiquitous Computing, Electronics \& Mobile Communication Conference (UEMCON)}, 2018, pp. 38-43.
	
	
	\bibitem{jiang2022value}
	Y. Jiang, W. Gao, J. Na, D. Zhang, T. T. Hämäläinen, V. Stojanovic and F. L. Lewis,
	``Value iteration and adaptive optimal output regulation with assured convergence rate'',
	\emph{Control Engineering Practice}, vol. 121, pp. 105042, 2022.
	
	
	\bibitem{gao2021reinforcement}
	W. Gao, M. Mynuddin, D. C. Wunsch and Z. P. Jiang,
	``Reinforcement learning-based cooperative optimal output regulation via distributed adaptive internal model'',
	\emph{IEEE transactions on neural networks and learning systems}, vol. 33, no. 10, pp. 5229-5240, 2021.
	
	\bibitem{lin2023}
L. Lin and J. Huang,
``A refined algorithm for the adaptive optimal output regulation problem'',
\emph{arXiv preprint}, arXiv:2309.15632, 2023.
	
		\bibitem{lin2024}
	L. Lin and J. Huang, ``Refined algorithms for adaptive optimal output regulation and adaptive optimal cooperative output regulation problems'', \emph{IEEE Transactions on Control of Network Systems}, 2024, DOI: 10.1109/TCNS.2024.3462549.
	


\bibitem{huang2004}
		J. Huang,
		\emph{Nonlinear Output Regulation: Theory and Applications}, Philadelphia, PA, USA: SIAM, 2004.


%
	
	\bibitem{kucera1972}
	V. Kucera,
	``A contribution to matrix quadratic equations'',
	\emph{IEEE Transactions on Automatic Control}, vol. 17, no. 3, pp. 344-347, 1972.
	
	
	
\bibitem{Kleinman} D. Kleinman, ``On an iterative technique for riccati equation computations'',
	\emph{IEEE Transactions on Automatic Control}, vol. 13, no. 1, pp. 114115, 1968.
	
	
%
%
%
%
	
	

	





	
	
	
	
	
		
		
	\end{thebibliography}
\end{document}